\documentclass[12pt]{amsproc}

\usepackage{amssymb,amsmath}

\addtolength{\oddsidemargin}{-2cm}
\addtolength{\evensidemargin}{-2cm}
\addtolength{\textwidth}{4cm}

\newtheorem{theorem}{Theorem}[section]

\theoremstyle{definition}

\newtheorem{corollary}[theorem]{Corollary}

\theoremstyle{remark}

\numberwithin{equation}{section}

\begin{document}
\title{Minimal Free Resolutions  of the Tangent Cones for Gorenstein Monomial Curves}

\author{P{\i}nar METE}
\address{Department of Mathematics,
Bal{\i}kesir University, Bal{\i}kesir , 10145 Turkey}
\email{pinarm@balikesir.edu.tr}
                \author{Esra Emine ZENG\.{I}N}
\address{Department of Mathematics,
Bal{\i}kesir University, Bal{\i}kesir , 10145 Turkey}
\email{esrazengin103@gmail.com}
\subjclass{Primary 13H10, 14H20; Secondary 13P10} \keywords{Gorenstein monomial curves, tangent cones, minimal free resolutions}

\date{\today}

\begin{abstract}
We study the minimal free resolution of the tangent cone of Gorenstein
monomial curves in affine 4-space. We give the explicit minimal free resolution of the tangent cone of non-complete intersection Gorenstein monomial curve whose tangent cone has 
five minimal generators  and show that the possible Betti sequences are  $(1,5,6,2)$ and $(1,5,5,1)$. Also, we compute the Hilbert function of the tangent cone of these families as a result.
\end{abstract}

\maketitle

\section{Introduction}
The minimal free resolution is a central topic in commutative algebra and is a very useful tool  for extracting information about modules. Many algebraic invariants of the module such as Hilbert function, Betti numbers, etc., can be deduced from its minimal free resolution. When the module is associated to a geometric object, these invariants give useful geometric information about it.
Since it is possible to calculate the Hilbert function in terms of the graded Betti numbers, free resolutions play an important role in the theory of Hilbert series. Although the Hilbert function of a standard graded algebra over a field $k$ is well known in the Cohen-Macaulay case, in general, very little is known in local algebra.  The problem which is due to M.E. Rossi  \cite{rossi} asks whether the Hilbert function of a Gorenstein local ring of dimension one is non-decreasing. Recently, it has been shown that there are many families of monomial curves giving negative answer to this problem \cite{oneto-strazzanti-tamone}. But it is still open for Gorenstein local rings associated to monomial curves in affine $d-$space for $3 < d < 10$ and our main aim is to understand the Hilbert function when $d=4$.

Let  $R$ be the polynomial ring $k[x_1, \ldots, x_d]$ over  an arbitrary field $k$.
A monomial affine curve $C$ has a parametrization
\begin{equation} \label{generalmonomial}
x_1=t^{n_1}, \; x_2=t^{n_2}, \; \ldots, \; x_d=t^{n_d}
\end{equation}
where $n_1,n_2 ,\ldots ,n_d$ are positive integers with ${\rm gcd}(n_1,n_2,...,n_d)=1$ and $n_1,n_2 ,\ldots ,n_d$ is a minimal set of generators for the numerical semigroup
\begin{center}
$S=<n_{1},n_2,...,n_{d}>=\{n \mid n=\sum_{i=1}^{d}a_{i}n_{i}, \; a_{i}$'s are
non-negative integers\}. 
\end{center}

The semigroup ring $k[t^{n_1},\ldots,t^{n_d}]$ of $S$ is isomorphic to the coordinate ring $k[x_1,\ldots,x_d]/I(C)$ and the coordinate ring $G=gr_{m}(k[[t^{n_1},\ldots,t^{n_d}]])$ of the tangent cone of a monomial curve $C$ at the origin is isomorphic to the ring $k[x_1,\ldots,x_d]/I(C)_*$. Here, $I(C)_*$ is generated by the polynomials $f_*$, the homogeneous summand of $f$ of the least degree, for $f$ in $I(C)$ where $I(C)$ is the defining ideal of $C$.
 A monomial curve given by the parametrization in (\ref{generalmonomial}) is called a Gorenstein monomial curve, if the associated local ring $k[[t^{n_1},t^{n_2},...,t^{n_d}]]$ is Gorenstein. $k[[t^{n_1},t^{n_2},...,t^{n_d}]]$ is Gorenstein if and only if the corresponding numerical semigroup $S=<n_{1},n_2,...,n_{d}>$ is symmetric \cite{kunz}.

Let $S=<n_{1},n_2,n_3,n_{4}>$ be a 4-generated  numerical semigroup. If $S$ is symmetric and complete intersection, then the Betti sequence of the corresponding semigroup ring is $(1,3,3,1)$, see \cite{stamate}.
Barucci, Fr\"{o}berg and \c{S}ahin  \cite{barucci-froberg-sahin} described
the minimal free resolution of the semigroup ring of $S$, when $S$ is symmetric and not complete intersection and showed that the Betti sequence is (1,5,5,1). 
When S is 4-generated symmetric and non-complete intersection semigroup, the minimal free resolution and  the list of possible Betti sequences of $G=gr_{m}(k[[t^{n_1},t^{n_2},t^{n_3},t^{n_4}]])$ is still unknown \cite{stamate}. For pseudo-symmetric numerical semigroups, see \cite{sahin-sahin1, sahin-sahin}. 
If  $S$  and its tangent cone have the same Betti sequence, then $S$ is of homogeneous type. For a homogeneous type semigroup, the Betti sequence of its Cohen-Macaulay tangent cone can be obtained from a minimal free resolution of its semigroup ring. For details, see \cite{herzog-rossi-valla}. In this article, we study the minimal free resolution of the tangent cone of Gorenstein non-complete intersection monomial curve $C$ in affine 4-space when  the minimal number of generators of its tangent cone  is  five. Since homogeneous type  semigroups have Cohen-Macaulay tangent cones and the Cohen-Macaulayness of tangent cones of these families of curves was shown in \cite{arslan-mete}, here we consider only 5-generated tangent cones. Based on the Buchsbaum-Eisenbud Theorem \cite{buchsbaum-eisenbud} and knowing the minimal generators of the defining ideal of the tangent cone in four cases \cite{arslan-mete}, we give the minimal free resolution of the tangent cone explicitly. Then, we  compute the Hilbert function of the tangent cone for these families as corollaries. All computations have been done using {\footnotesize SINGULAR} \cite{singular}.

\section{Bresinsky's Theorem}
In \cite{bresinsky}, Bresinsky  gives the explicit description of the defining ideal of a non-complete intersection Gorenstein monomial curve with embedding dimension four by the following theorem.

\begin{theorem}\label{Bresinsky}
Let $C$ be a monomial curve having the parametrization
\[
x_1=t^{n_1}, \; x_2=t^{n_2}, \; x_3=t^{n_3}, \; x_4=t^{n_4}
\]
where $S$ is a numerical semigroup minimally generated by $n_1,n_2,n_3,n_4$.  $S$ is symmetric and $C$ is a non-complete intersection monomial curve if and only if
$I(C)$ is generated by the set
\[
\begin{split}
G=\{ &
f_{1}=x_{1}^{\alpha_{1}}-x_{3}^{\alpha_{13}}x_{4}^{\alpha_{14}},
f_{2}=x_{2}^{\alpha_{2}}-x_{1}^{\alpha_{21}}x_{4}^{\alpha_{24}},
f_{3}=x_{3}^{\alpha_{3}}-x_{1}^{\alpha_{31}}x_{2}^{\alpha_{32}}, \\
& f_{4}=x_{4}^{\alpha_{4}}-x_{2}^{\alpha_{42}}x_{3}^{\alpha_{43}},
f_{5}=x_{3}^{\alpha_{43}}x_{1}^{\alpha_{21}}-x_{2}^{\alpha_{32}}x_{4}^{\alpha_{14}}\}
\end{split}
\]
where the polynomials $f_{i}$'s are unique up to
isomorphism with $0 < \alpha_{ij} < \alpha_{j}$ with $\alpha_{i}n_i \in <n_1,\ldots,\hat{n}_i,\ldots,n_4>$ such that
$\alpha_{i}$'s are minimal for $1\leq i \leq 4$, where $\hat{n}_i$ denotes that
$n_i \notin <n_1,\ldots,\hat{n}_i,\ldots,n_4>$.
\end{theorem}

In Theorem 2.1, there is no restriction on the order of $n_1,\ldots, n_4$ and the set $G$ is valid for only
a permutation of these numbers. If we assume that $n_1<n_2<n_3<n_4$, then we have to revise the set $G$ with respect to the correct permutation of the variables $x_1,x_2,x_3,x_4$. Thus, 
there are six isomorphic possible
permutations which can be considered within three cases:

\begin{enumerate}

\item $f_1=(1,(3,4))$

\begin{enumerate}

\item
$f_2=(2,(1,4)), f_3=(3,(1,2)), f_4=(4,(2,3)), f_5=((1,3),(2,4))$
\item $f_2=(2,(1,3)), f_3=(3,(2,4)), f_4=(4,(1,2)), f_5=((1,4),(2,3))$
\end{enumerate}

\item $f_1=(1,(2,3))$

\begin{enumerate}

\item
$f_2=(2,(3,4)), f_3=(3,(1,4)), f_4=(4,(1,2)), f_5=((2,4),(1,3))$
\item  $f_2=(2,(1,4)), f_3=(3,(2,4)), f_4=(4,(1,3)), f_5=((1,3),(4,2))$
\end{enumerate}

\item $f_1=(1,(2,4))$

\begin{enumerate}

\item $f_2=(2,(1,3)), f_3=(3,(1,4)), f_4=(4,(2,3)), f_5=((1,2),(3,4))$
\item $f_2=(2,(3,4)), f_3=(3,(1,2)), f_4=(4,(1,3)), f_5=((2,3),(1,4))$
\end{enumerate}

\end{enumerate}

\vskip2mm\noindent Here, the notations $f_i=(i,(j,k))$ and $f_5=((i,j),(k,l))$
denote the generators
$f_i=x_i^{\alpha_i}-x_j^{\alpha_{ij}}x_k^{\alpha_{ik}}$ and
$f_5=x_i^{\alpha_{ki}} x_j^{\alpha_{lj}}-x_k^{\alpha_{jk}}x_l^{\alpha_{il}}$
Thus, if we have the extra condition $n_1 < n_2 < n_3 < n_4$, then the generator set of its defining ideal is exactly given by one of these six permutations.

In \cite{arslan-mete}, Arslan and Mete observed that the generator set of each of these curves turned out to be a standard basis with respect to
the negative degree reverse lexicographical ordering in the following cases:
\begin{itemize}
\item In Case 1(a) with the restriction $\alpha_2 \leq
\alpha_{21}+\alpha_{24}$,
\item In Case 1(b) with the restriction $\alpha_2 \leq
\alpha_{21}+\alpha_{23}$, $\alpha_3\leq
\alpha_{32}+\alpha_{34}$.
\item In Case 2(b) with the restriction $\alpha_2
\leq \alpha_{21}+\alpha_{24}$, $\alpha_3 \leq
\alpha_{32}+\alpha_{34}$.
\item In Case 3(a) with the restriction
$\alpha_2 \leq \alpha_{21}+\alpha_{23}$, $\alpha_3 \leq
\alpha_{31}+\alpha_{34}$
\end{itemize}
 \noindent And in all above cases,  the minimal number of generators of the tangent cone of a Gorenstein non-complete intersection monomial curve is five. One can also see \cite{arslan-katsabekis-nalbandiyan}.

\section{Minimal Free Resolutions}

In this section, we give the minimal free resolution of the tangent cone  of Gorenstein non-complete intersection monomial curve $C$ in embedding dimension four when  the minimal number of generators of the tangent cone of $C$ is  five.

\vspace{2mm}\noindent\textbf{Case 1(a) :}
Let
$$f_{1}=x_{1}^{\alpha_{1}}-x_{3}^{\alpha_{13}}x_{4}^{\alpha_{14}},
f_{2}=x_{2}^{\alpha_{2}}-x_{1}^{\alpha_{21}}x_{4}^{\alpha_{24}},
f_{3}=x_{3}^{\alpha_{3}}-x_{1}^{\alpha_{31}}x_{2}^{\alpha_{32}}, \\
f_{4}=x_{4}^{\alpha_{4}}-x_{2}^{\alpha_{42}}x_{3}^{\alpha_{43}}$$ and 
$$f_{5}=x_{3}^{\alpha_{43}}x_{1}^{\alpha_{21}}-x_{2}^{\alpha_{32}}x_{4}^{\alpha_{14}}$$

\noindent where $\alpha_{1}=\alpha_{21}+\alpha_{31}$,  $\alpha_{2}=\alpha_{32}+\alpha_{42}$,
 $\alpha_{3}=\alpha_{13}+\alpha_{43}$,  $\alpha_{4}=\alpha_{14}+\alpha_{24}.$
The condition $n_1<n_2<n_3<n_4$ implies
$\;\alpha_1>\alpha_{13}+\alpha_{14}, \; \alpha_4<\alpha_{42}+\alpha_{43}\;$ and $\;\alpha_3<\alpha_{31}+\alpha_{32}.\;$
Since the extra condition $\alpha_2\leq \alpha_{21}+\alpha_{24}$ and using Lemma 5.5.1 in \cite{greuel-pfister}, the defining ideal $I(C)_*$ of the tangent cone is generated by the following sets: \vspace{0.3cm}

\begin{itemize}
 \item$Case \;1(a1)\;: I(C)_*=(x_{3}^{\alpha_{13}}x_{4}^{\alpha_{14}},x_{2}^{\alpha_{2}},x_{3}^{\alpha_{3}},x_{4}^{\alpha_{4}},x_{2}^{\alpha_{32}}x_{4}^{\alpha_{14}})$
\item$Case\; 1(a2)\;: I(C)_*=(x_{3}^{\alpha_{13}}x_{4}^{\alpha_{14}},x_{2}^{\alpha_{2}}-x_1^{\alpha_{21}}x_{4}^{\alpha_{24}} ,x_{3}^{\alpha_{3}},x_{4}^{\alpha_{4}},x_{2}^{\alpha_{32}}x_{4}^{\alpha_{14}})$
\end{itemize}


\begin{theorem}\label{thm2} In  $Case \;1(a1)$ and $Case \;1(a2)$, the sequence of R-modules

\begin{center}
$ 0 \rightarrow R^2 \xrightarrow{\phi_3} R^6 \xrightarrow{\phi_2} R^5\xrightarrow{\phi_1} R^1  \rightarrow R/I(C)_* \rightarrow 0 $
\end{center}

is a minimal free resolution for the tangent cone of $C$, where
{\footnotesize
$$
\phi_1=\Big(
\begin{matrix}
x_{3}^{\alpha_{13}}x_{4}^{\alpha_{14}} &
x_{2}^{\alpha_{2}} &
x_{3}^{\alpha_{3}} &
x_{4}^{\alpha_{4}}&
x_{2}^{\alpha_{32}}x_{4}^{\alpha_{14}} 
\end{matrix}\Big),
$$\\[-1cm]

$$
\phi_2=
\begin{pmatrix}
x_3^{\alpha_{43}} & 0 & x_4^{\alpha_{24}} & x_2^{\alpha_{32}} & 0 & 0\\[0.5mm]
0 & x_3^{\alpha_{3}} & 0 & 0 & 0 & x_4^{\alpha_{14}}\\[0.5mm]
-x_4^{\alpha_{14}} & -x_2^{\alpha_{2}} & 0 & 0 & 0 & 0 \\[0.5mm]
0 & 0 & -x_3^{\alpha_{13}} & 0 &  x_2^{\alpha_{32}} & 0\\[0.5mm]
0 & 0 & 0  &  -x_3^{\alpha_{13}}  & -x_4^{\alpha_{24}}& -x_2^{\alpha_{42}}\\
\end{pmatrix},
\phi_3=
\begin{pmatrix}
x_{2}^{\alpha_{2}} & 0\\[0.5mm]
-x_{4}^{\alpha_{14}} & 0 \\[0.5mm]
0 & x_{2}^{\alpha_{32}} \\[0.5mm]
 -x_{2}^{\alpha_{42}}x_{3}^{\alpha_{43}} & -x_{4}^{\alpha_{24}}\\[0.5mm]
0 & x_{3}^{\alpha_{13}} \\[0.5mm]
x_{3}^{\alpha_{3}} & 0\\
\end{pmatrix}
$$
}
or 

{\footnotesize
$$
\phi_1=\Big(
\begin{matrix}
x_{3}^{\alpha_{13}}x_{4}^{\alpha_{14}} &
x_{2}^{\alpha_{2}}-x_1^{\alpha_{21}}x_{4}^{\alpha_{24}} &
x_{3}^{\alpha_{3}} &
x_{4}^{\alpha_{4}}&
x_{2}^{\alpha_{32}}x_{4}^{\alpha_{14}} 
\end{matrix}\Big),
$$\\[-5mm]

$$
\phi_2=
\begin{pmatrix}
x_4^{\alpha_{24}} & x_2^{\alpha_{32}} & x_3^{\alpha_{43}} & 0 & 0 & 0\\[0.5mm]
0 & 0 & 0 & x_4^{\alpha_{14}} & 0 & x_3^{\alpha_{3}}\\[0.5mm]
0 &  0 & -x_4^{\alpha_{14}} & 0 & 0 & -x_2^{\alpha_{2}}+x_1^{\alpha_{21}}x_4^{\alpha_{24}}\\[0.5mm]
-x_3^{\alpha_{13}} & 0 & 0 & x_1^{\alpha_{21}} & x_2^{\alpha_{32}} & 0\\[0.5mm]
0 & -x_3^{\alpha_{13}} & 0  &  -x_2^{\alpha_{42}}  & -x_4^{\alpha_{24}} & 0\\
\end{pmatrix},
\phi_3=
\begin{pmatrix}
x_{2}^{\alpha_{32}} & x_{1}^{\alpha_{21}}x_{3}^{\alpha_{43}}\\[0.5mm]
-x_{4}^{\alpha_{24}} & -x_{2}^{\alpha_{42}}x_{3}^{\alpha_{43}} \\[0.5mm]
0 & x_{2}^{\alpha_{2}}-x_{1}^{\alpha_{21}}x_{4}^{\alpha_{24}} \\[0.5mm]
0 & x_{3}^{\alpha_{3}}\\[0.5mm]
x_{3}^{\alpha_{13}} & 0 \\[0.5mm]
0 & -x_{4}^{\alpha_{14}}\\
\end{pmatrix}
$$
}
respectively.
\end{theorem}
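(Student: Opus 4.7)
The plan is to apply the Buchsbaum--Eisenbud acyclicity criterion together with a direct minimality check. This splits into three tasks: verifying the displayed sequence is a complex, verifying exactness through rank and grade conditions, and verifying minimality. The two forms of $I(C)_*$ are handled in parallel, since the two matrix lists differ only by the substitution $x_2^{\alpha_2}\mapsto x_2^{\alpha_2}-x_1^{\alpha_{21}}x_4^{\alpha_{24}}$ in selected entries and a few correspondingly adjusted positions.

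The complex conditions $\phi_1\phi_2=0$ and $\phi_2\phi_3=0$ are verified by direct column-by-column matrix multiplication. Each column of $\phi_2$ gives a proposed first syzygy among the five generators of $I(C)_*$, and each column of $\phi_3$ gives a proposed second syzygy among the columns of $\phi_2$; the resulting cancellations all reduce to the Bresinsky identities $\alpha_1=\alpha_{21}+\alpha_{31}$, $\alpha_2=\alpha_{32}+\alpha_{42}$, $\alpha_3=\alpha_{13}+\alpha_{43}$, $\alpha_4=\alpha_{14}+\alpha_{24}$. Minimality is immediate since every nonzero entry of every $\phi_i$ is a monomial or binomial of positive total degree, hence lies in the maximal ideal $(x_1,x_2,x_3,x_4)$.

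For exactness over the Cohen--Macaulay ring $R=k[x_1,\ldots,x_4]$, grade coincides with height, and the criterion demands the rank equalities $\mathrm{rk}(\phi_i)=1,4,2$ together with $\mathrm{ht}\,I_1(\phi_1)\ge 1$, $\mathrm{ht}\,I_4(\phi_2)\ge 2$, and $\mathrm{ht}\,I_2(\phi_3)\ge 3$. The rank equalities are each witnessed by an explicit nonvanishing maximal minor, and the first height bound is trivial because $I_1(\phi_1)=I(C)_*$ is nonzero. For $\mathrm{ht}\,I_2(\phi_3)\ge 3$ in the first matrix list, the $2\times 2$ minors on row pairs $(1,3)$, $(2,4)$, and $(5,6)$ simplify to $x_2^{\alpha_2+\alpha_{32}}$, $x_4^{\alpha_4}$, and $-x_3^{\alpha_3+\alpha_{13}}$; these are pure powers of three distinct variables and form a regular sequence of length $3$. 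In the second list, row pairs $(4,5)$ and $(2,6)$ still yield $-x_3^{\alpha_3+\alpha_{13}}$ and $x_4^{\alpha_4}$, while the $(1,3)$ minor gives $x_2^{\alpha_{32}}(x_2^{\alpha_2}-x_1^{\alpha_{21}}x_4^{\alpha_{24}})$, whose residue modulo the first two elements is $x_2^{\alpha_{32}+\alpha_2}$, a nonzerodivisor on $R/(x_3,x_4)$, so again one obtains a regular sequence of length $3$.

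The main obstacle is the intermediate bound $\mathrm{ht}\,I_4(\phi_2)\ge 2$, since $\phi_2$ is a $5\times 6$ matrix with $\binom{5}{4}\binom{6}{4}=75$ candidate maximal minors, many of which vanish or share variable support. For the first matrix list I plan to exhibit two minors whose joint vanishing locus has codimension at least two: the $4\times 4$ minor on columns $\{1,3,5,6\}$ and rows $\{1,2,3,5\}$ evaluates to $-x_4^{2\alpha_4}$, while the minor on columns $\{1,2,4,5\}$ and rows $\{1,2,4,5\}$ evaluates to $x_2^{\alpha_{32}}x_3^{2\alpha_3}$. Any associated prime of $I_4(\phi_2)$ must then contain $x_4$ and at least one of $x_2,x_3$, forcing height at least two. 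For the second matrix list, the binomial perturbation $-x_2^{\alpha_2}+x_1^{\alpha_{21}}x_4^{\alpha_{24}}$ and the extra $x_1^{\alpha_{21}}$ entries only modify lower-order terms of the corresponding minors and do not cancel their leading monomials, so the analogous pair of row/column choices produces minors with the same leading behaviour and the grade condition follows verbatim.
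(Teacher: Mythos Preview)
Your argument is correct and follows the same Buchsbaum--Eisenbud strategy as the paper: verify the complex conditions, then exhibit enough explicit minors of $\phi_2$ and $\phi_3$ to certify the required rank and grade bounds. Your choices of minors differ from the paper's (and you add an explicit minimality remark, which the paper omits), but the logic is identical.

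One point deserves tightening. In the last paragraph you treat the second $\phi_2$ as the first one with a binomial perturbation, and appeal to ``leading behaviour'' of the same row/column selections. In fact the second $\phi_2$ is a \emph{column permutation} of the first (columns $1,2,3,4,5,6$ of the first correspond to columns $3,6,1,2,5,4$ of the second) together with the extra entries $x_1^{\alpha_{21}}$ in position $(4,4)$ and the binomial in position $(3,6)$. Your chosen minors do survive---under the permutation your selections become columns $\{1,3,4,5\}$ with rows $\{1,2,3,5\}$ (giving $-x_4^{2\alpha_4}$) and columns $\{2,3,5,6\}$ with rows $\{1,2,4,5\}$ (giving $\pm x_2^{\alpha_{32}}x_3^{2\alpha_3}$)---and the perturbed entries sit in rows $3$ and $4$, which are excluded in the respective submatrices, so no cancellation can occur. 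But this should be stated explicitly rather than waved through; the paper, by contrast, simply exhibits two fresh coprime $4\times 4$ minors for the second matrix directly.
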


\begin{proof}$Case \;1(a1) :$ 
It is easy to show that $\phi_1\phi_2=\phi_2\phi_3=0$ proving that the sequence above is a complex. To prove the exactness, we
use Buchsbaum-Eisenbud criterion \cite{buchsbaum-eisenbud}. Therefore, first we need to check that 
\begin{center}
$rank(\phi_1)+rank(\phi_2)=1+4=5$ and
$rank(\phi_2)+rank(\phi_3)=4+2=6.$
\end{center}
Clearly, $rank(\phi_1)=1$ and $rank(\phi_3)=2$. Since every $5 \times 5$ minors of $\phi_2$ is zero, by McCoy's Theorem $rank(\phi_2) \leq 4$. In matrix $\phi_2$ , deleting the 1st and the 3rd columns, and the 2nd row, we have $-x_2^{2\alpha_{2}+\alpha_{32}}$ and similarly, deleting the 3rd row, and the 5th and the 6th columns, we obtain 
$x_3^{2\alpha_{3}+\alpha_{13}}$ as $4 \times 4-$minors of $\phi_2$. So, $rank(\phi_2)=4$. These two determinants are relatively prime, so $I(\phi_2)$ contains a regular sequence of length 2.  Among the 2-minors of $\phi_3$, we have $x_2^{\alpha_2+\alpha_{32}}$, $x_3^{\alpha_3+\alpha_{13}}$ and
$x_4^{\alpha_4}$ and this is a  regular sequence, since $\{x_2,x_3,x_4\}$  is a regular sequence. Thus, $I(\phi_3)$ contains a regular sequence of length 3.

$Case \;1(a2) :$ 
Similar to the first case, it is clear that $\phi_1\phi_2=\phi_2\phi_3=0$.  $rank(\phi_1)=1$ and $rank(\phi_3)=2$ are trivial. In matrix $\phi_2$, deleting the 3rd row, and the 4th and the 5th columns, we have $x_3^{2{\alpha_{3}}+{\alpha_{13}}}$ and similarly, deleting the 2nd and the 6th columns, and the 4th row, we obtain $x_4^{2\alpha_{4}}$ and these determinants are relatively prime. 
2-minors of $\phi_3$ are 
\begin{center}
$-x_3^{\alpha_{43}}f_2$, $x_2^{\alpha_{32}}f_2$, $x_2^{\alpha_{32}}x_3^{\alpha_{3}}$, $-x_1^{\alpha_{21}}x_3^{\alpha_{3}}$, $-x_2^{\alpha_{32}}x_4^{\alpha_{14}}$, $-x_4^{\alpha_{24}}f_2$, $-x_4^{\alpha_{24}}x_3^{\alpha_{3}}$, $x_2^{\alpha_{42}}x_3^{\alpha_{3}}$, $x_4^{\alpha_{4}}$, $-x_3^{\alpha_{13}}f_2$, $-x_3^{\alpha_{3}+{\alpha_{13}}}$, $-x_3^{\alpha_{13}}x_4^{\alpha_{14}}.$
\end{center}
\noindent Among these 2-minors of $\phi_3$, we have 
$\{x_2^{\alpha_{32}}f_2$, $-x_3^{\alpha_3+\alpha_{13}}$, $x_4^{\alpha_{4}}\}$. Since $x_2$ is a nonzero divisor modulo $\{x_3,x_4\}$, $I(\phi_3)$ contains a regular sequence of length 3. 
\end{proof}

\vspace{2mm}\noindent\textbf{Case 1(b) :} Let
$$f_{1}=x_{1}^{\alpha_{1}}-x_{3}^{\alpha_{13}}x_{4}^{\alpha_{14}},
f_{2}=x_{2}^{\alpha_{2}}-x_{1}^{\alpha_{21}}x_{3}^{\alpha_{23}},
f_{3}=x_{3}^{\alpha_{3}}-x_{2}^{\alpha_{32}}x_{4}^{\alpha_{34}}, \\
f_{4}=x_{4}^{\alpha_{4}}-x_{1}^{\alpha_{41}}x_{2}^{\alpha_{42}}$$ and 
$$f_{5}=x_{2}^{\alpha_{42}}x_{3}^{\alpha_{13}}-x_{1}^{\alpha_{21}}x_{4}^{\alpha_{34}}$$

\noindent Here, $\alpha_{1}=\alpha_{21}+\alpha_{41}$,  $\alpha_{2}=\alpha_{32}+\alpha_{42}$,
 $\alpha_{3}=\alpha_{13}+\alpha_{23}$,  $\alpha_{4}=\alpha_{14}+\alpha_{34}.$ The condition $n_1<n_2<n_3<n_4$ implies
$\alpha_1>\alpha_{13}+\alpha_{14},$ and $ \alpha_4<\alpha_{41}+\alpha_{42}$.
The extra condition $\alpha_2\leq \alpha_{21}+\alpha_{23}$ and  $\alpha_3\leq \alpha_{32}+\alpha_{34}$
again using Lemma 5.5.1 in \cite{greuel-pfister} imply that the defining ideal $I(C)_*$ of the tangent cone is generated by the following sets: \vspace{0.3cm}

\begin{itemize}
\item$Case \;1(b1)\;: I(C)_*=(x_{3}^{\alpha_{13}}x_{4}^{\alpha_{14}}, x_{2}^{\alpha_{2}}, x_{3}^{\alpha_{3}}, x_{4}^{\alpha_{4}}, 
x_{2}^{\alpha_{42}}x_{3}^{\alpha_{13}})$

\item$Case \;1(b2)\;: I(C)_*=(x_{3}^{\alpha_{13}}x_{4}^{\alpha_{14}},x_{2}^{\alpha_{2}}-x_1^{\alpha_{21}}x_{3}^{\alpha_{23}} ,x_{3}^{\alpha_{3}},x_{4}^{\alpha_{4}},x_{2}^{\alpha_{42}}x_{3}^{\alpha_{13}})$

\item$Case \;1(b3) \;:I(C)_*=(x_{3}^{\alpha_{13}}x_{4}^{\alpha_{14}},x_{2}^{\alpha_{2}},x_{3}^{\alpha_{3}}-x_{2}^{\alpha_{32}}x_{4}^{\alpha_{34}},x_{4}^{\alpha_{4}},x_{2}^{\alpha_{42}}x_{3}^{\alpha_{13}})$

\item$Case \;1(b4)\;: I(C)_*\!=\!(x_{3}^{\alpha_{13}}x_{4}^{\alpha_{14}},x_{2}^{\alpha_{2}}-x_1^{\alpha_{21}}x_{3}^{\alpha_{23}} ,x_{3}^{\alpha_{3}}\!-\!x_{2}^{\alpha_{32}}x_{4}^{\alpha_{34}},x_{4}^{\alpha_{4}},x_{1}^{\alpha_{21}}x_{4}^{\alpha_{34}}\!-\!x_{2}^{\alpha_{42}}x_{3}^{\alpha_{13}})$
\end{itemize}

\begin{theorem}\label{thm3} 
In $Case \;1(b1)$, the minimal free resolution for the tangent cone of C is
\begin{center}
$ 0 \rightarrow R^2 \xrightarrow{\phi_3} R^6 \xrightarrow{\phi_2} R^5\xrightarrow{\phi_1} R^1  \rightarrow R/I(C)_* \rightarrow 0 $
\end{center}
\noindent where
{\footnotesize
$$
\phi_1=
\begin{pmatrix}
x_3^{\alpha_{13}}x_4^{\alpha_{14}} &
x_{2}^{\alpha_{2}}&
x_{3}^{\alpha_{3}} &
x_{4}^{\alpha_{4}}&
x_{2}^{\alpha_{42}}x_{3}^{\alpha_{13}} 
\end{pmatrix},
$$\\[-1cm]

$$
\phi_2=
\begin{pmatrix}
-x_4^{\alpha_{34}} & 0 &  -x_3^{\alpha_{23}}  & 0 & x_2^{\alpha_{42}}  & 0 \\[0.5mm]
0 & -x_4^{\alpha_{4}} & 0 & 0 & 0 & -x_3^{\alpha_{13}}  \\[0.5mm]
0 & 0 & x_4^{\alpha_{14}} & x_2^{\alpha_{42}} & 0  & 0 \\[0.5mm]
 x_3^{\alpha_{13}} & x_2^{\alpha_{2}} & 0 & 0 & 0 & 0\\[0.5mm]
0 & 0 & 0 & -x_3^{\alpha_{23}}&  -x_4^{\alpha_{14}} & x_2^{\alpha_{32}} \\
\end{pmatrix},
\hspace{3mm}
\phi_3=
\begin{pmatrix}
x_{2}^{\alpha_{2}} & 0\\[0.5mm]
-x_{3}^{\alpha_{13}} & 0 \\[0.5mm]
0 & x_{2}^{\alpha_{42}} \\[0.5mm]
0 & -x_{4}^{\alpha_{14}}\\[0.5mm]
 x_{2}^{\alpha_{32}}x_{4}^{\alpha_{34}} & x_{3}^{\alpha_{23}} \\
x_{4}^{\alpha_{4}} & 0
\end{pmatrix},
$$
}

in $Case \;1(b2)$,  the minimal free resolution for the tangent cone of C is

\begin{center}
$ 0 \rightarrow R^2 \xrightarrow{\phi_3} R^6 \xrightarrow{\phi_2} R^5\xrightarrow{\phi_1} R^1  \rightarrow R/I(C)_* \rightarrow 0 $
\end{center}

{\footnotesize
$$
\phi_1=
\begin{pmatrix}
x_3^{\alpha_{13}}x_4^{\alpha_{14}} &
x_{2}^{\alpha_{2}}-x_{1}^{\alpha_{21}}x_{3}^{\alpha_{23}}&
x_{3}^{\alpha_{3}} &
x_{4}^{\alpha_{4}}&
x_{2}^{\alpha_{42}}x_{3}^{\alpha_{13}} 
\end{pmatrix},
$$\\[-5mm]

$$
\phi_2=
\begin{pmatrix}
-x_4^{\alpha_{34}} & 0 &  -x_3^{\alpha_{23}}  & 0 & 0 & x_2^{\alpha_{42}}   \\[0.5mm]
0 & -x_4^{\alpha_{4}} & 0 & 0 &  x_3^{\alpha_{13}} & 0 \\[0.5mm]
0 & 0 & x_4^{\alpha_{14}} & x_2^{\alpha_{42}} & x_{1}^{\alpha_{21}}  & -x_{4}^{\alpha_{14}} \\[0.5mm]
 x_3^{\alpha_{13}} & x_2^{\alpha_{2}}-x_{1}^{\alpha_{21}}x_{3}^{\alpha_{23}} & 0 & 0 & 0 & 0\\[0.5mm]
0 & 0 & 0 & -x_3^{\alpha_{23}}&  -x_2^{\alpha_{32}} & 0\\
\end{pmatrix},
\phi_3=
\begin{pmatrix}
x_{2}^{\alpha_{2}}-x_{1}^{\alpha_{21}}x_{3}^{\alpha_{23}} & 0\\[0.5mm]
-x_{3}^{\alpha_{13}} & 0 \\[0.5mm]
x_{1}^{\alpha_{21}}x_{4}^{\alpha_{34}} & x_{2}^{\alpha_{42}} \\[0.5mm]
0 & -x_{4}^{\alpha_{14}}\\[0.5mm]
-x_{4}^{\alpha_{4}} & 0\\[0.5mm]
 x_{2}^{\alpha_{32}}x_{4}^{\alpha_{34}} & x_{3}^{\alpha_{23}} 
\end{pmatrix},
$$
}

\noindent in $Case \;1(b3)$, 
\begin{center}
$0 \rightarrow R^1 \xrightarrow{\phi_3} R^5\xrightarrow{\phi_2} R^5 \xrightarrow{\phi_1} R^1  \rightarrow G \rightarrow 0$
\end{center}
\noindent where

{\footnotesize
$$
\phi_1=
\begin{pmatrix}
x_3^{\alpha_{13}}x_4^{\alpha_{14}} &
x_{2}^{\alpha_{2}}&
x_{3}^{\alpha_{3}}-x_{2}^{\alpha_{32}}x_{4}^{\alpha_{34}} &
x_{4}^{\alpha_{4}}&
x_{2}^{\alpha_{42}}x_{3}^{\alpha_{13}} 
\end{pmatrix},
$$\\[-1cm]

$$
\phi_2=
\begin{pmatrix}
-x_3^{\alpha_{23}} & 0 &  x_4^{\alpha_{34}}  &  x_2^{\alpha_{42}}  & 0 \\[0.5mm]
0 & x_4^{\alpha_{34}} & 0 & 0 & x_3^{\alpha_{13}}  \\[0.5mm]
x_4^{\alpha_{14}} & x_2^{\alpha_{42}} & 0  & 0 & 0\\[0.5mm]
x_2^{\alpha_{32}} & 0 & -x_3^{\alpha_{13}} & 0 & 0 \\[0.5mm]
0  & -x_3^{\alpha_{23}}& 0 &  -x_4^{\alpha_{14}} & -x_2^{\alpha_{32}} \\
\end{pmatrix},
\hspace{5mm}
\phi_3=
\begin{pmatrix}
x_{2}^{\alpha_{42}}x_{3}^{\alpha_{13}} \\[0.5mm]
-x_{3}^{\alpha_{13}} x_{4}^{\alpha_{14}}  \\[0.5mm]
 x_{2}^{\alpha_{2}} \\[0.5mm]
x_{3}^{\alpha_{3}}-x_{2}^{\alpha_{32}}x_{4}^{\alpha_{34}}\\[0.5mm]
x_{4}^{\alpha_{4}} \\
\end{pmatrix},
$$
}

\noindent and in $Case \;1(b4)$, then the minimal free resolution of the tangent cone of $C$ is
\begin{center}
$0 \rightarrow R^1 \xrightarrow{\phi_3} R^5\xrightarrow{\phi_2} R^5 \xrightarrow{\phi_1} R^1  \rightarrow G \rightarrow 0$
\end{center}
\noindent where

{\footnotesize
$$
\phi_1=
\begin{pmatrix}
x_{3}^{\alpha_{13}}x_{4}^{\alpha_{14}} &
x_{2}^{\alpha_{2}}-x_{1}^{\alpha_{21}}x_{3}^{\alpha_{23}}&
x_{3}^{\alpha_{3}}-x_{2}^{\alpha_{32}}x_{4}^{\alpha_{34}} &
x_{4}^{\alpha_{4}}&
x_{2}^{\alpha_{42}}x_{3}^{\alpha_{13}}-x_{1}^{\alpha_{21}}x_{4}^{\alpha_{34}}
\end{pmatrix},
$$

$$
\phi_2=
\begin{pmatrix}
-x_4^{\alpha_{34}} &  x_3^{\alpha_{23}}  & -x_2^{\alpha_{42}}  & 0 & 0\\[0.5mm]
0 &  0 & 0 & x_4^{\alpha_{34}} &  -x_3^{\alpha_{13}}  \\[0.5mm]
0 &  -x_4^{\alpha_{14}} &0 & x_2^{\alpha_{42}}  &  -x_1^{\alpha_{21}}\\[0.5mm]
x_3^{\alpha_{13}} & -x_2^{\alpha_{32}} & x_1^{\alpha_{21}} & 0 & 0\\[0.5mm]
0 & 0  &   x_4^{\alpha_{14}} & -x_3^{\alpha_{23}} & x_2^{\alpha_{32}} \\
\end{pmatrix},
\hspace{5mm}
\phi_3=
\begin{pmatrix}
x_{2}^{\alpha_{2}}-x_{1}^{\alpha_{21}}x_{3}^{\alpha_{23}}\\[0.5mm]
x_{2}^{\alpha_{42}}x_{3}^{\alpha_{13}}-x_{1}^{\alpha_{21}}x_{4}^{\alpha_{34}} \\[0.5mm]
x_{3}^{\alpha_{3}}-x_{2}^{\alpha_{32}}x_{4}^{\alpha_{34}}\\[0.5mm]
x_{3}^{\alpha_{13}}x_{4}^{\alpha_{14}}\\[0.5mm]
x_{4}^{\alpha_{4}}
\end{pmatrix}.
$$
}
\end{theorem}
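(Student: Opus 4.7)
The plan is to apply the Buchsbaum-Eisenbud exactness criterion \cite{buchsbaum-eisenbud} to each of the four complexes, in direct parallel with the proof of Theorem~\ref{thm2}. For a complex $0 \to F_3 \xrightarrow{\phi_3} F_2 \xrightarrow{\phi_2} F_1 \xrightarrow{\phi_1} F_0$ to be exact it suffices that (i) $\phi_i\,\phi_{i+1}=0$, (ii) the rank equalities $rank(\phi_i)+rank(\phi_{i+1}) = rank(F_i)$ hold, and (iii) the ideal of $r_i$-minors $I(\phi_i)$ contains a regular sequence of length $i$, where $r_i = rank(\phi_i)$. Step (i) is a routine, if tedious, matrix multiplication, in which the exponent identities $\alpha_1 = \alpha_{21}+\alpha_{41}$, $\alpha_2 = \alpha_{32}+\alpha_{42}$, $\alpha_3 = \alpha_{13}+\alpha_{23}$, $\alpha_4 = \alpha_{14}+\alpha_{34}$ force the cancelling monomials to match.

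For the two subcases with Betti sequence $(1,5,6,2)$, the required ranks are $1$, $4$, $2$, and McCoy's theorem bounds $rank(\phi_2)\le 4$ since all $5\times 5$ minors of the $5\times 6$ matrix $\phi_2$ vanish. To obtain a length-$2$ regular sequence in $I(\phi_2)$ one locates two $4\times 4$ minors that are, up to sign, pure powers of two distinct variables among $x_2, x_3, x_4$: the deletion patterns that isolate $x_2^{\alpha_2}$ and $x_3^{\alpha_3}$ from their neighbouring entries provide them, exactly as in the proof of Theorem~\ref{thm2}. For the $6\times 2$ matrix $\phi_3$, the $2\times 2$ minors include pure powers of $x_2$, $x_3$, and $x_4$, which are pairwise coprime and therefore form a regular sequence of length $3$.

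For the two Gorenstein subcases with Betti sequence $(1,5,5,1)$, the resolution is self-dual: up to sign and reordering of its entries, $\phi_3$ is the transpose of $\phi_1$, so the ideal $I(\phi_3)$ generated by the entries of $\phi_3$ equals $I(C)_*$. Since in Case~1(b) the hypotheses $\alpha_2 \le \alpha_{21}+\alpha_{23}$ and $\alpha_3 \le \alpha_{32}+\alpha_{34}$ make the generators of $I(C)$ a genuine standard basis by \cite{arslan-mete}, the tangent cone $R/I(C)_*$ is Cohen-Macaulay of Krull dimension $1$, and hence $I(C)_*$ has grade $3$; this immediately supplies the length-$3$ regular sequence in $I(\phi_3)$. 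For the middle map $\phi_2$, now $5\times 5$ of rank $4$, the plan is again to exhibit two $4\times 4$ minors whose pure-power supports are disjoint variables.

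The main obstacle is the bookkeeping in the binomial subcases (Cases 2 and 4), where one or more of $f_{2*}$, $f_{3*}$, $f_{5*}$ are true binomials rather than monomials. In those cases the relevant $4\times 4$ minors of $\phi_2$ are themselves binomials, typically factoring as a pure-power monomial times a binomial of the form $x_j^{\beta} - x_k^{\gamma}x_\ell^{\delta}$. Because such a binomial has a pure-power initial term it is a non-zero-divisor on $R$, and pairing two such minors with disjoint pure-power factors still produces a length-$2$ regular sequence in $I(\phi_2)$. Identifying the right pairs of minors in each of the four subcases is the combinatorially delicate step, and the explicit choices have been cross-checked with \textsc{Singular} \cite{singular}.
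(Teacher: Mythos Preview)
Your plan is correct and follows essentially the same route as the paper: apply the Buchsbaum--Eisenbud criterion, verify $\phi_i\phi_{i+1}=0$, bound $rank(\phi_2)$ via McCoy's theorem, and exhibit explicit minors that form regular sequences of the required lengths. The paper carries this out by naming, in each subcase, the specific row/column deletions that produce two coprime $4\times 4$ minors of $\phi_2$ (e.g.\ $x_3^{2\alpha_3}$ and $x_2^{2\alpha_2+\alpha_{42}}$ in the first subcase) and, for the $(1,5,6,2)$ subcases, three coprime $2\times 2$ minors of $\phi_3$; your description of these choices is slightly less specific but points in the same direction.

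The one genuine difference is your treatment of $I(\phi_3)$ in the two $(1,5,5,1)$ subcases. The paper simply records $rank(\phi_3)=1$ and moves on, leaving the depth-$3$ condition for $I(\phi_3)$ implicit. You instead observe that $\phi_3$ is, up to sign and order, the transpose of $\phi_1$, so $I(\phi_3)=I(C)_*$, and then invoke the Cohen--Macaulayness of the tangent cone from \cite{arslan-mete} to conclude $\mathrm{grade}\,I(C)_*=3$. This is a cleaner and more conceptual justification than the paper supplies, and it exploits the self-duality of the Gorenstein resolution rather than hunting for explicit minors.
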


\begin{proof} $Case \;1(b1) :$ 
$rank(\phi_1)=1$ and $rank(\phi_3)=2$. As in the $Case 1(a)$,  in matrix $\phi_2$, deleting the 2nd and the 5th columns, and the 3rd row, we have $x_3^{2\alpha_{3}}$ and similarly, deleting the 2nd row, and the 1st and the 3rd columns, we obtain 
$-x_{2}^{2\alpha_{2}+\alpha_{42}}$  as $4 \times 4-$ minors of $\phi_2$. These two determinants are relatively prime, so $I(\phi_2)$ contains a regular sequence of length 2. 
Among the 2-minors of $\phi_3$, we have $x_2^{\alpha_{2}+\alpha_{42}}$, $-x_{3}^{\alpha_3}$ and
$x_4^{\alpha_{4}+\alpha_{14}}$ and these three determinants constitute a regular sequence. Thus, $I(\phi_3)$ contains a regular sequence of length 3.\\

$Case \;1(b2) :$ 
It is clear that  $rank(\phi_1)=1$ and  $rank(\phi_3)=2$. In matrix 
$\phi_2$, deleting the 3rd row, and the 2nd and the 6th columns, we have $-x_3^{2{\alpha_{3}}}$ and deleting the 4th and the 6th columns, and the 4th row, we obtain $-x_2^{\alpha_{32}}x_4^{2\alpha_{4}}$ and these determinants are relatively prime. 
Among the 2-minors of $\phi_3$, we have $-x_3^{\alpha_{3}}$ , $-x_4^{\alpha_{4}+\alpha_{14}}$ and
$x_2^{\alpha_{42}}{f_2}$. Since $x_2$ is a nonzero divisor modulo $\{x_3,x_4\}$, $I(\phi_3)$ contains a regular sequence of length 3.\\

$Case \;1(b3) :$ 
Clearly, $rank(\phi_1)=rank(\phi_3)=1$.
In matrix 
$\phi_2$, deleting the 2nd row and the 3rd column, we have $x_{2}^{2\alpha_{2}}$ and deleting the 4th row and the 5th column, we get $x_{4}^{2\alpha_{4}}$. Since these determinants are powers of different variables, they constitute a regular sequence of length 2.\\

$Case \;1(b4) :$ 
As in the above case, 
$rank(\phi_1)=rank(\phi_3)=1$.
In matrix 
$\phi_2$, deleting 2nd row and 1st column, we have ${f_2}^{2}$ and deleting 4th row  and 5th column, we obtain $x_{4}^{2\alpha_{4}}$. These two determinans are relatively prime, they constitute a regular sequence. $I(\phi_2)$ contains a regular sequence of length 2.
\end{proof}

\vspace{2mm}\noindent\textbf{Case 2(b) :} Let
$$f_{1}=x_{1}^{\alpha_{1}}-x_{2}^{\alpha_{12}}x_{3}^{\alpha_{13}},
f_{2}=x_{2}^{\alpha_{2}}-x_{1}^{\alpha_{21}}x_{4}^{\alpha_{24}},
f_{3}=x_{3}^{\alpha_{3}}-x_{2}^{\alpha_{32}}x_{4}^{\alpha_{34}}, \\
f_{4}=x_{4}^{\alpha_{4}}-x_{1}^{\alpha_{41}}x_{3}^{\alpha_{43}}$$ and 
$$f_{5}=x_{1}^{\alpha_{41}}x_{2}^{\alpha_{32}}-x_{3}^{\alpha_{13}}x_{4}^{\alpha_{24}}.$$

\noindent Here, $\alpha_{1}=\alpha_{21}+\alpha_{41}$,  $\alpha_{2}=\alpha_{12}+\alpha_{32}$,
 $\alpha_{3}=\alpha_{13}+\alpha_{43}$,  $\alpha_{4}=\alpha_{24}+\alpha_{34}.$
The condition $n_1<n_2<n_3<n_4$ implies
$\alpha_1>\alpha_{12}+\alpha_{13}$ and $ \alpha_4<\alpha_{41}+\alpha_{43}$.
Since the extra condition $\alpha_2\leq \alpha_{21}+\alpha_{24}$, $\alpha_3\leq \alpha_{32}+\alpha_{34}$
and Lemma 5.5.1 in \cite{greuel-pfister}, the defining ideal $I(C)_*$ of the tangent cone is generated by the following sets: \vspace{0.2cm}

\begin{itemize}
\item$Case \;2(b1)\;:  I(C)_*=(x_{2}^{\alpha_{12}}x_{3}^{\alpha_{13}}, x_{2}^{\alpha_{2}}, x_{3}^{\alpha_{3}}, x_{4}^{\alpha_{4}}, 
x_{3}^{\alpha_{13}}x_{4}^{\alpha_{24}})$

\item$Case \;2(b2)\;:  I(C)_*=(x_{2}^{\alpha_{12}}x_{3}^{\alpha_{13}},x_{2}^{\alpha_{2}}-x_1^{\alpha_{21}}x_{4}^{\alpha_{24}} ,x_{3}^{\alpha_{3}},x_{4}^{\alpha_{4}},x_{3}^{\alpha_{13}}x_{4}^{\alpha_{24}})$

\item$Case \;2(b3)\;: I(C)_*=(x_{2}^{\alpha_{12}}x_{3}^{\alpha_{13}},x_{2}^{\alpha_{2}},x_{3}^{\alpha_{3}}-x_{2}^{\alpha_{32}}x_{4}^{\alpha_{34}},x_{4}^{\alpha_{4}},x_{3}^{\alpha_{13}}x_{4}^{\alpha_{24}})$

\item$Case \;2(b4)\;: I(C)_*=(x_{2}^{\alpha_{12}}x_{3}^{\alpha_{13}},x_{2}^{\alpha_{2}}-x_1^{\alpha_{21}}x_{4}^{\alpha_{24}} ,x_{3}^{\alpha_{3}}-x_{2}^{\alpha_{32}}x_{4}^{\alpha_{34}},x_{4}^{\alpha_{4}},x_{3}^{\alpha_{13}}x_{4}^{\alpha_{24}})$
\end{itemize}

\begin{theorem}\label{thm3.3} In $Case \;2(b1)$,  the minimal free resolution for the tangent cone of C is
\begin{center}
$ 0 \rightarrow R^2 \xrightarrow{\phi_3} R^6 \xrightarrow{\phi_2} R^5\xrightarrow{\phi_1} R^1  \rightarrow R/I(C)_* \rightarrow 0 $
\end{center}
\noindent where
{\footnotesize
$$
\phi_1=
\begin{pmatrix}
x_2^{\alpha_{12}}x_3^{\alpha_{13}} &
x_{2}^{\alpha_{2}}&
x_{3}^{\alpha_{3}} &
x_{4}^{\alpha_{4}}&
x_{3}^{\alpha_{13}}x_{4}^{\alpha_{24}} 
\end{pmatrix},
$$\\[-1cm]

$$
\phi_2=
\begin{pmatrix}
x_4^{\alpha_{24}} &  x_3^{\alpha_{43}}  & x_2^{\alpha_{32}}  & 0& 0 & 0  \\[0.5mm]
0 & 0& -x_3^{\alpha_{13}} & 0 & 0 & -x_4^{\alpha_{4}}  \\[0.5mm]
0 &- x_2^{\alpha_{12}} &  0  & 0 &- x_4^{\alpha_{24}}   & 0\\[0.5mm]
0 & 0 & 0 & -x_3^{\alpha_{13}} &   0  & x_2^{\alpha_{2}} \\[0.5mm]
-x_2^{\alpha_{12}} & 0 & 0 &   x_4^{\alpha_{34}} & x_3^{\alpha_{43}}  & 0\\
\end{pmatrix},
\hspace{3mm}
\phi_3=
\begin{pmatrix}
x_{3}^{\alpha_{43}} &  x_{2}^{\alpha_{32}}x_{4}^{\alpha_{34}}\\[0.5mm]
-x_{4}^{\alpha_{24}} & 0 \\[0.5mm]
0 & -x_{4}^{\alpha_{4}} \\[0.5mm]
0 & x_{2}^{\alpha_{2}}\\[0.5mm]
x_{2}^{\alpha_{12}}& 0 \\[0.5mm]
0 & x_{3}^{\alpha_{13}} 
\end{pmatrix},
$$
}

\noindent in $Case \;2(b2)$, the minimal free resolution for the tangent cone of C is

\begin{center}
$ 0 \rightarrow R^2 \xrightarrow{\phi_3} R^6 \xrightarrow{\phi_2} R^5\xrightarrow{\phi_1} R^1  \rightarrow R/I(C)_* \rightarrow 0 $
\end{center}

{\footnotesize
$$
\phi_1=
\begin{pmatrix}
x_2^{\alpha_{12}}x_3^{\alpha_{13}} &
x_{2}^{\alpha_{2}}-x_{1}^{\alpha_{21}}x_{4}^{\alpha_{24}}&
x_{3}^{\alpha_{3}} &
x_{4}^{\alpha_{4}}&
x_{3}^{\alpha_{13}}x_{4}^{\alpha_{24}} 
\end{pmatrix},
$$\\[-5mm]

$$
\phi_2=
\begin{pmatrix}
x_4^{\alpha_{24}} & x_3^{\alpha_{43}}  &  x_2^{\alpha_{32}} & 0 & 0 & 0 \\[0.5mm]
0 & 0 & -x_3^{\alpha_{13}} & 0 & 0 &  -x_4^{\alpha_{4}}  \\[0.5mm]
0 &  -x_2^{\alpha_{12}} & 0 & 0  & -x_{4}^{\alpha_{24}} & 0\\[0.5mm]
0 & 0 & 0 & -x_3^{\alpha_{13}} & 0  & x_2^{\alpha_{2}}-x_{1}^{\alpha_{21}}x_{4}^{\alpha_{24}} \\[0.5mm]
-x_2^{\alpha_{12}} & 0 & -x_1^{\alpha_{21}}&  x_4^{\alpha_{34}} & x_3^{\alpha_{43}} & 0\\
\end{pmatrix},
\phi_3=
\begin{pmatrix}
x_{3}^{\alpha_{43}} &  x_{2}^{\alpha_{32}}x_{4}^{\alpha_{34}} \\[0.5mm]
-x_{4}^{\alpha_{24}} & 0 \\[0.5mm]
0 & -x_{4}^{\alpha_{4}} \\[0.5mm]
0 & x_{2}^{\alpha_{2}}-x_{1}^{\alpha_{21}}x_{4}^{\alpha_{24}} \\[0.5mm]
 x_{2}^{\alpha_{12}} & 0 \\[0.5mm]
 0 & x_{3}^{\alpha_{13}}
\end{pmatrix},
$$
}

\noindent in $Case \;2(b3)$, the minimal free resolution of the tangent cone
of $C$ is
\begin{center}
$0 \rightarrow R^1 \xrightarrow{\phi_3} R^5\xrightarrow{\phi_2} R^5 \xrightarrow{\phi_1} R^1  \rightarrow R/I(C)_* \rightarrow 0$
\end{center}
\noindent where

{\footnotesize
$$
\phi_1=
\begin{pmatrix}
x_2^{\alpha_{12}}x_3^{\alpha_{13}} &
x_{2}^{\alpha_{2}}&
x_{3}^{\alpha_{3}}-x_{2}^{\alpha_{32}}x_{4}^{\alpha_{34}} &
x_{4}^{\alpha_{4}}&
x_{3}^{\alpha_{13}}x_{4}^{\alpha_{24}} 
\end{pmatrix},
$$\\[-1cm]

$$
\phi_2=
\begin{pmatrix}
-x_4^{\alpha_{24}} & 0 & 0 & -x_2^{\alpha_{32}}  &  -x_3^{\alpha_{43}}  \\[0.5mm]
0 & 0 & 0 & x_3^{\alpha_{13}}  &  x_4^{\alpha_{34}}  \\[0.5mm]
0 &  -x_4^{\alpha_{24}} & 0 & 0  & x_{2}^{\alpha_{12}} \\[0.5mm]
0  & -x_2^{\alpha_{32}} & -x_3^{\alpha_{13}} & 0 & 0 \\[0.5mm]
x_2^{\alpha_{12}} &  x_3^{\alpha_{43}}&  x_4^{\alpha_{34}} & 0 & 0\\
\end{pmatrix},
\hspace{3mm}
\phi_3=
\begin{pmatrix}
x_{3}^{\alpha_{3}}-x_{2}^{\alpha_{32}}x_{4}^{\alpha_{34}}\\[0.5mm]
-x_{2}^{\alpha_{12}}x_{3}^{\alpha_{13}} \\[0.5mm]
 x_{2}^{\alpha_{2}} \\[0.5mm]
 x_{4}^{\alpha_{4}} \\[0.5mm]
-x_{3}^{\alpha_{13}} x_{4}^{\alpha_{24}}  \\
\end{pmatrix},
$$
}

\noindent lastly, in $Case \;2(b4)$, the minimal free resolution for the tangent cone of $C$ is
\begin{center}
$0 \rightarrow R^1 \xrightarrow{\phi_3} R^5\xrightarrow{\phi_2} R^5 \xrightarrow{\phi_1} R^1  \rightarrow R/I(C)_* \rightarrow 0$
\end{center}
\noindent where

{\footnotesize
$$
\phi_1=
\begin{pmatrix}
x_{2}^{\alpha_{12}}x_{3}^{\alpha_{13}} &
x_{2}^{\alpha_{2}}-x_{1}^{\alpha_{21}}x_{4}^{\alpha_{24}}&
x_{3}^{\alpha_{3}}-x_{2}^{\alpha_{32}}x_{4}^{\alpha_{34}} &
x_{4}^{\alpha_{4}}&
x_{3}^{\alpha_{13}}x_{4}^{\alpha_{24}}
\end{pmatrix},
$$

$$
\phi_2=
\begin{pmatrix}
x_4^{\alpha_{24}} & x_2^{\alpha_{32}}  &  x_3^{\alpha_{43}} & 0 & 0 \\[0.5mm]
0 & -x_3^{\alpha_{13}}  &  -x_4^{\alpha_{34}} & 0 & 0  \\[0.5mm]
0 &  0 & -x_2^{\alpha_{12}} & 0  & -x_{4}^{\alpha_{24}} \\[0.5mm]
0  & 0 & -x_1^{\alpha_{21}} & -x_3^{\alpha_{13}} &  -x_2^{\alpha_{32}} \\[0.5mm]
-x_2^{\alpha_{12}} &  -x_1^{\alpha_{21}}& 0 &   x_4^{\alpha_{34}} & x_3^{\alpha_{43}}\\[0.5mm]
\end{pmatrix},
\hspace{5mm}
\phi_3=
\begin{pmatrix}
x_{3}^{\alpha_{3}}-x_{2}^{\alpha_{32}}x_{4}^{\alpha_{34}}\\[0.5mm]
 x_{4}^{\alpha_{4}} \\[0.5mm]
-x_{3}^{\alpha_{13}}x_{4}^{\alpha_{24}} \\[0.5mm] 
 -x_{2}^{\alpha_{2}}+x_{1}^{\alpha_{21}}x_{4}^{\alpha_{24}} \\[0.5mm]
x_{2}^{\alpha_{12}}x_{3}^{\alpha_{13}} \\
\end{pmatrix}.
$$
}
\end{theorem}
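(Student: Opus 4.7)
The plan is to adapt the strategy used in Theorems \ref{thm2} and \ref{thm3}, applying the Buchsbaum-Eisenbud exactness criterion to each of the four complexes in turn. For every one of the four presentations of $I(C)_*$ I would first verify by direct block-by-block matrix multiplication that $\phi_1\phi_2=0$ and $\phi_2\phi_3=0$, so the proposed sequence is a complex of $R$-modules. The columns of $\phi_2$ are, by design, the Koszul-type syzygies coming from the obvious binomial relations among the five generators (for example $x_4^{\alpha_{24}}\cdot(x_2^{\alpha_{12}}x_3^{\alpha_{13}})=x_2^{\alpha_{12}}\cdot(x_3^{\alpha_{13}}x_4^{\alpha_{24}})$ and its analogues), while $\phi_3$ encodes the second syzygies witnessed by the remaining binomial identities in the pattern dictated by the Gorenstein structure on $I(C)$; the check is routine once one matches entries to these relations.

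For the first two subcases the target Betti sequence is $(1,5,6,2)$, so the Buchsbaum-Eisenbud rank equalities to verify are $\operatorname{rank}(\phi_1)+\operatorname{rank}(\phi_2)=5$ and $\operatorname{rank}(\phi_2)+\operatorname{rank}(\phi_3)=6$. Since $\operatorname{rank}(\phi_1)=1$ is immediate and every $5\times 5$ minor of $\phi_2$ vanishes by McCoy's Theorem (forcing $\operatorname{rank}(\phi_2)\le 4$), it suffices to exhibit two coprime $4\times 4$ minors of $\phi_2$ and, separately, three coprime $2\times 2$ minors of $\phi_3$, thereby showing $\operatorname{depth}\, I(\phi_2)\ge 2$ and $\operatorname{depth}\, I(\phi_3)\ge 3$. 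I would locate these as in the earlier proofs: delete a carefully chosen pair of columns of $\phi_2$ and one row to recover a diagonal-like block that evaluates to a pure power of one $x_i$ (e.g.\ $f_{3*}^{2}$ or $f_{4*}^{2}$), and pair it with another deletion that yields a power involving a disjoint set of variables; for $\phi_3$ the three coprime $2\times 2$ minors should be pure powers (up to sign) of $x_2$, $x_3$, and $x_4$ respectively, which can be read off from the monomial columns.

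For the last two subcases the target is $(1,5,5,1)$, so $\phi_3$ is a $5\times 1$ column and $\operatorname{rank}(\phi_3)=1$ is automatic; the rank identities reduce to $\operatorname{rank}(\phi_2)=4$, still giving $\operatorname{rank}(\phi_2)+\operatorname{rank}(\phi_3)=5$. Here the depth requirement is $\operatorname{depth}\, I(\phi_2)\ge 2$ and $\operatorname{depth}\, I(\phi_3)\ge 1$; the latter is obvious because $\phi_3$ contains the entry $x_4^{\alpha_4}$ (a nonzerodivisor). For the former I would again extract two coprime $4\times 4$ minors of $\phi_2$ by deleting complementary rows and columns, typically producing ${f_3}^{2}$ and ${f_4}_{*}^{2}$ (or ${f_2}^{2}$ and ${f_4}_{*}^{2}$ in the fourth subcase), which are coprime because one is a polynomial involving only $x_3,x_2,x_4$ in a specific combination and the other is a pure power of $x_4$ (or $x_2$ mixed with $x_1$).

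The main obstacle will be the fourth (and to a lesser extent the second) subcase, where three of the five generators are genuine binomials rather than monomials and the $2\times 2$ and $4\times 4$ minors of $\phi_2,\phi_3$ come out as products involving $f_2$, $f_3$, and $f_5$ themselves. Showing that such mixed products are relatively prime in the polynomial ring $R$ requires arguing that no nontrivial polynomial factor can be shared; I would do this by invoking the fact that each $f_i$ is irreducible (as the difference of two coprime monomials of different degrees in disjoint variables) together with the observation that the pure-power minor in the paired deletion involves variables missing from the binomial, so the gcd must be a unit. Once these coprimality verifications are completed for each of the four cases, the Buchsbaum-Eisenbud criterion delivers exactness and minimality (minimality being clear because every entry of every $\phi_i$ lies in the maximal ideal).
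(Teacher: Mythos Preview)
Your approach is exactly the paper's: check $\phi_1\phi_2=\phi_2\phi_3=0$ by direct computation, then apply the Buchsbaum--Eisenbud exactness criterion by exhibiting coprime minors of the appropriate sizes. The choices of deletions and the minors you describe match the paper's almost verbatim. There is, however, one genuine slip in your handling of the two $(1,5,5,1)$ subcases. You assert that the depth requirement for the last map is $\operatorname{depth} I(\phi_3)\ge 1$, and justify it with the single nonzerodivisor $x_4^{\alpha_4}$. The Buchsbaum--Eisenbud criterion actually demands $\operatorname{grade} I(\phi_k)\ge k$ for every $k$, so for $\phi_3$ in a length-three complex you still need $\operatorname{grade} I(\phi_3)\ge 3$; a single regular element does not suffice.

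The repair is immediate once noticed (and the paper itself is tacit on this point rather than explicit): in the $(1,5,5,1)$ subcases the entries of the column $\phi_3$ are, up to sign, precisely the five generators of $I(C)_*$, so $I(\phi_3)=I(C)_*$. Among those entries one finds, for instance in the third subcase, $x_2^{\alpha_2}$, $x_4^{\alpha_4}$, and $x_3^{\alpha_3}-x_2^{\alpha_{32}}x_4^{\alpha_{34}}$, which form a regular sequence in $R$ (the first two are coprime monomials, and modulo them the third is a unit-leading-term perturbation of $x_3^{\alpha_3}$). Equivalently, since $R/I(C)_*$ is the tangent cone of a curve and hence one-dimensional, $I(C)_*$ has height $3$ in the polynomial ring, giving $\operatorname{grade} I(\phi_3)=3$ directly. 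With this correction your outline is complete and coincides with the paper's proof.
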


\begin{proof} $Case \;2(b1) : $
$rank(\phi_1)=1$ and $rank(\phi_3)=2$. Since every $5 \times 5$ minors of $\phi_2$ is zero, $rank(\phi_2) \leq 4$. 
In matrix $\phi_2$, deleting the 1st and the 6th columns, and the 3rd row, we have $x_3^{2\alpha_{3}}$ and  deleting the 2nd row, and the 4th and the 5th columns, we get 
$x_2^{2\alpha_{2}+{\alpha_{12}}}$  as $4 \times 4-$minors of $\phi_2$. These two determinants are relatively prime, so $I(\phi_2)$ contains a regular sequence of length 2. 
Among the 2-minors of $\phi_3$, we have $-x_2^{\alpha_{2}+\alpha_{12}}$, $x_{3}^{\alpha_{3}}$ and
$x_4^{\alpha_{4}+\alpha_{24}}$. Since these are powers of different variables, $I(\phi_3)$ contains a regular sequence of length 3.\\

$Case \;2(b2) : $
$rank(\phi_1)=1$ and $rank(\phi_3)=2$. In matrix 
$\phi_2$, deleting the 3rd row, and the 1st and the 6th columns, we have $x_3^{2{\alpha_{3}}}$ and  deleting the 2nd and the 3rd columns, and the 4th row, we obtain $-x_4^{2\alpha_{4} +\alpha_{24}}$ and these determinants are relatively prime. 
Among the 2-minors of $\phi_3$, we have $-x_2^{\alpha_{12}}{f_2}$, $x_3^{\alpha_{3}}$ and $x_4^{\alpha_{4}+\alpha_{24}}$. Since $x_2$ is a nonzero divisor modulo $\{x_3,x_4\}$, $I(\phi_3)$ contains a regular sequence of length 3.\\

$Case \;2(b3) : $
Clearly, $rank(\phi_1)=rank(\phi_3)=1$.
In matrix 
$\phi_2$, deleting the 3rd column and the 2nd row, we have $-x_{2}^{2\alpha_{2}}$ and deleting the 4th row and the 4th column, we obtain $x_{4}^{2\alpha_{4}}$ and these  are relatively prime.\\ 

$Case \;2(b4) : $
As in the above case, 
$rank(\phi_1)=rank(\phi_3)=1$.
In the matrix
$\phi_2$, deleting the 1st row and the 5th column, we have $-x_2^{2{\alpha_{12}}}x_3^{2{\alpha_{13}}}$ and  deleting the 4th row and the 2nd column, we obtain $-x_{4}^{2\alpha_{4}}$ and they  are relatively prime. 
\end{proof}

\vspace{2mm}\noindent\textbf{Case 3(a) :} In this case,
$$f_{1}=x_{1}^{\alpha_{1}}-x_{2}^{\alpha_{12}}x_{4}^{\alpha_{14}},
f_{2}=x_{2}^{\alpha_{2}}-x_{1}^{\alpha_{21}}x_{3}^{\alpha_{23}},
f_{3}=x_{3}^{\alpha_{3}}-x_{1}^{\alpha_{31}}x_{4}^{\alpha_{34}}, \\
f_{4}=x_{4}^{\alpha_{4}}-x_{2}^{\alpha_{42}}x_{3}^{\alpha_{43}}$$ and 
$$f_{5}=x_{1}^{\alpha_{31}}x_{2}^{\alpha_{42}}-x_{3}^{\alpha_{23}}x_{4}^{\alpha_{14}}$$

\noindent Here, $\alpha_{1}=\alpha_{21}+\alpha_{31}$,  $\alpha_{2}=\alpha_{12}+\alpha_{42}$,
 $\alpha_{3}=\alpha_{23}+\alpha_{43}$,  $\alpha_{4}=\alpha_{14}+\alpha_{34}.$ The condition $n_1<n_2<n_3<n_4$ gives
$\alpha_1>\alpha_{12}+\alpha_{14}$ and $ \alpha_4<\alpha_{42}+\alpha_{43}$.
The extra conditions $\alpha_2\leq \alpha_{21}+\alpha_{23}$,  $\alpha_3\leq \alpha_{31}+\alpha_{34}$
and Lemma 5.5.1 in \cite{greuel-pfister} imply that the defining ideal $I(C)_*$ of the tangent cone is generated by the following sets: \vspace{0.3cm}

\begin{itemize}
\item$Case \;3(a1)\;: I(C)_*=(x_{2}^{\alpha_{12}}x_{4}^{\alpha_{14}}, x_{2}^{\alpha_{2}}, x_{3}^{\alpha_{3}}, x_{4}^{\alpha_{4}}, 
x_{3}^{\alpha_{23}}x_{4}^{\alpha_{14}})$

\item$Case \;3(a2)\;:  I(C)_*=(x_{2}^{\alpha_{12}}x_{3}^{\alpha_{14}},x_{2}^{\alpha_{2}}-x_1^{\alpha_{21}}x_{3}^{\alpha_{23}} ,x_{3}^{\alpha_{3}},x_{4}^{\alpha_{4}},x_{3}^{\alpha_{23}}x_{4}^{\alpha_{14}})$

\item$Case \;3(a3)\;: I(C)_*=(x_{2}^{\alpha_{12}}x_{4}^{\alpha_{14}},x_{2}^{\alpha_{2}},x_{3}^{\alpha_{3}}-x_{1}^{\alpha_{31}}x_{4}^{\alpha_{34}},x_{4}^{\alpha_{4}},x_{3}^{\alpha_{23}}x_{4}^{\alpha_{14}})$

\item$Case \;3(a4)\;: I(C)_*=(x_{2}^{\alpha_{12}}x_{4}^{\alpha_{14}},x_{2}^{\alpha_{2}}-x_1^{\alpha_{21}}x_{3}^{\alpha_{23}} ,x_{3}^{\alpha_{3}}-x_{1}^{\alpha_{31}}x_{4}^{\alpha_{34}},x_{4}^{\alpha_{4}},x_{3}^{\alpha_{23}}x_{4}^{\alpha_{14}})$
\end{itemize}

\begin{theorem}\label{thm3.4} 
In $Case \;3(a1)$, then the minimal free resolution for the tangent cone of C is
\begin{center}
$ 0 \rightarrow R^2 \xrightarrow{\phi_3} R^6 \xrightarrow{\phi_2} R^5\xrightarrow{\phi_1} R^1  \rightarrow R/I(C)_* \rightarrow 0 $
\end{center}
\noindent where
{\footnotesize
$$
\phi_1=
\begin{pmatrix}
x_2^{\alpha_{12}}x_4^{\alpha_{14}} &
x_{2}^{\alpha_{2}}&
x_{3}^{\alpha_{3}} &
x_{4}^{\alpha_{4}}&
x_{3}^{\alpha_{23}}x_{4}^{\alpha_{14}} 
\end{pmatrix},
$$\\[-1cm]

$$
\phi_2=
\begin{pmatrix}
0 & 0 & x_3^{\alpha_{23}} &  x_4^{\alpha_{34}}  & x_2^{\alpha_{42}}  & 0\\[0.5mm]
0 & -x_3^{\alpha_{3}} & 0 & 0 & -x_4^{\alpha_{14}} & 0  \\[0.5mm]
x_4^{\alpha_{14}} &  x_2^{\alpha_{2}} & 0 & 0 & 0 & 0\\[0.5mm]
0 & 0 & 0 & -x_2^{\alpha_{12}} &   0  & -x_3^{\alpha_{23}} \\[0.5mm]
-x_3^{\alpha_{43}} & 0 &  -x_2^{\alpha_{12}} & 0  & 0 & x_4^{\alpha_{34}}\\
\end{pmatrix},
\hspace{1mm}
\phi_3=
\begin{pmatrix}
 x_{2}^{\alpha_{2}} & 0 \\[0.5mm]
-x_{4}^{\alpha_{14}} & 0 \\[0.5mm]
-x_{2}^{\alpha_{42}}x_{3}^{\alpha_{43}} & x_{4}^{\alpha_{34}} \\[0.5mm]
0 & -x_{3}^{\alpha_{23}}\\[0.5mm]
x_{3}^{\alpha_{3}}& 0 \\[0.5mm]
0 & x_{2}^{\alpha_{12}} 
\end{pmatrix},
$$
}

\noindent in $Case \;3(a2)$, the minimal free resolution for the tangent cone of $C$ is

\begin{center}
$ 0 \rightarrow R^2 \xrightarrow{\phi_3} R^6 \xrightarrow{\phi_2} R^5\xrightarrow{\phi_1} R^1  \rightarrow R/I(C)_* \rightarrow 0 $
\end{center}

{\footnotesize
$$
\phi_1=
\begin{pmatrix}
x_2^{\alpha_{12}}x_4^{\alpha_{14}} &
x_{2}^{\alpha_{2}}-x_{1}^{\alpha_{21}}x_{3}^{\alpha_{23}}&
x_{3}^{\alpha_{3}} &
x_{4}^{\alpha_{4}}&
x_{3}^{\alpha_{23}}x_{4}^{\alpha_{14}} 
\end{pmatrix},
$$\\[-5mm]
$$
\phi_2=
\begin{pmatrix}
0 & 0 & x_3^{\alpha_{23}} & x_4^{\alpha_{34}}  &  x_2^{\alpha_{42}} & 0  \\[0.5mm]
0 &  -x_3^{\alpha_{3}}& 0 & 0 &  -x_4^{\alpha_{14}}& 0  \\[0.5mm]
x_{4}^{\alpha_{14}} & x_2^{\alpha_{2}}-x_{1}^{\alpha_{21}}x_{3}^{\alpha_{23}} &  0 & 0  &  0 & 0\\[0.5mm]
0 & 0 & 0 & -x_2^{\alpha_{12}} & 0  &   -x_3^{\alpha_{23}}\\[0.5mm]
-x_3^{\alpha_{43}} & 0 & -x_2^{\alpha_{12}}&  0 & -x_1^{\alpha_{21}} & x_4^{\alpha_{34}}\\
\end{pmatrix},
\phi_3=
\begin{pmatrix}
x_{2}^{\alpha_{2}}-x_{1}^{\alpha_{21}}x_{3}^{\alpha_{23}} &  0\\[0.5mm]
-x_{4}^{\alpha_{14}} & 0 \\[0.5mm]
-x_{2}^{\alpha_{42}}x_{3}^{\alpha_{43}}  & x_{4}^{\alpha_{34}} \\[0.5mm]
 0 & -x_{3}^{\alpha_{23}}\\[0.5mm]
x_{3}^{\alpha_{3}} & 0 \\[0.5mm]
0 & x_{2}^{\alpha_{12}} 
\end{pmatrix},
$$
}

\noindent in $Case \;3(a3)$, the minimal free resolution of the tangent cone of $C$ is
\begin{center}
$ 0 \rightarrow R^2 \xrightarrow{\phi_3} R^6 \xrightarrow{\phi_2} R^5\xrightarrow{\phi_1} R^1  \rightarrow R/I(C)_* \rightarrow 0 $
\end{center}

\noindent where

{\footnotesize
$$
\phi_1=
\begin{pmatrix}
x_2^{\alpha_{12}}x_4^{\alpha_{14}} &
x_{2}^{\alpha_{2}}&
x_{3}^{\alpha_{3}}-x_{1}^{\alpha_{31}}x_{4}^{\alpha_{34}} &
x_{4}^{\alpha_{4}}&
x_{3}^{\alpha_{23}}x_{4}^{\alpha_{14}} 
\end{pmatrix},
$$\\[-5mm]

$$
\phi_2=
\begin{pmatrix}
0 & 0 & x_3^{\alpha_{23}} &  x_4^{\alpha_{34}}  &  x_2^{\alpha_{42}} & 0 \\[0.5mm]
0 & -x_{3}^{\alpha_{3}}+x_{1}^{\alpha_{31}}x_{4}^{\alpha_{34}} & 0 & 0   &  -x_4^{\alpha_{14}}  & 0\\[0.5mm]
x_4^{\alpha_{14}} &  x_{2}^{\alpha_{2}} & 0   & 0 & 0 & 0\\[0.5mm]
x_1^{\alpha_{31}} & 0 & 0 & -x_2^{\alpha_{12}} & 0 &  -x_3^{\alpha_{23}} \\[0.5mm]
-x_3^{\alpha_{43}} &  0 & -x_2^{\alpha_{12}}&  0 & 0 &  x_4^{\alpha_{34}}\\
\end{pmatrix},
\phi_3=
\begin{pmatrix}
x_{2}^{\alpha_{2}}  &  0\\[0.5mm]
-x_{4}^{\alpha_{14}} & 0 \\[0.5mm]
-x_{2}^{\alpha_{42}}x_{3}^{\alpha_{43}}  & x_{4}^{\alpha_{34}} \\[0.5mm]
x_{1}^{\alpha_{31}}x_{2}^{\alpha_{42}}& -x_{3}^{\alpha_{23}}\\[0.5mm]
x_{3}^{\alpha_{3}}-x_{1}^{\alpha_{31}}x_{4}^{\alpha_{34}}  & 0 \\[0.5mm]
0 & x_{2}^{\alpha_{12}} 
\end{pmatrix},
$$
}

\noindent lastly, in $Case \;3(a4)$, then the minimal free resolution of the tangent cone is
\begin{center}
$ 0 \rightarrow R^2 \xrightarrow{\phi_3} R^6 \xrightarrow{\phi_2} R^5\xrightarrow{\phi_1} R^1  \rightarrow R/I(C)_* \rightarrow 0 $
\end{center}

\noindent where

{\footnotesize
$$
\phi_1=
\begin{pmatrix}
x_{2}^{\alpha_{12}}x_{4}^{\alpha_{14}} &
x_{2}^{\alpha_{2}}-x_{1}^{\alpha_{21}}x_{3}^{\alpha_{23}}&
x_{3}^{\alpha_{3}}-x_{1}^{\alpha_{31}}x_{4}^{\alpha_{34}} &
x_{4}^{\alpha_{4}}&
x_{3}^{\alpha_{23}}x_{4}^{\alpha_{14}}
\end{pmatrix},
$$\\[-5mm]

$$
\phi_2=
\begin{pmatrix}
x_3^{\alpha_{23}} & x_2^{\alpha_{42}}  &  x_4^{\alpha_{34}} & 0 & 0  & 0\\[0.5mm]
0 & -x_4^{\alpha_{14}}  &   0 & 0  & 0 & x_{3}^{\alpha_{3}}-x_{1}^{\alpha_{31}}x_{4}^{\alpha_{34}} \\[0.5mm]
0 &  0 &  0  & -x_{4}^{\alpha_{14}} & 0 & -x_{2}^{\alpha_{2}}+x_{1}^{\alpha_{21}}x_{3}^{\alpha_{23}} \\[0.5mm]
0  & 0 & -x_2^{\alpha_{12}} & -x_1^{\alpha_{31}} &  -x_3^{\alpha_{23}} & 0\\[0.5mm]
-x_2^{\alpha_{12}} &  -x_1^{\alpha_{21}}& 0 &   x_3^{\alpha_{43}} & x_4^{\alpha_{34}} & 0\\
\end{pmatrix},
\phi_3=
\begin{pmatrix}
x_{4}^{\alpha_{34}}  &  x_{2}^{\alpha_{42}}x_{3}^{\alpha_{43}}\\[0.5mm]
 0  & -x_{3}^{\alpha_{3}}+x_{1}^{\alpha_{31}}x_{4}^{\alpha_{34}}\\[0.5mm]
-x_{3}^{\alpha_{23}}  & -x_{1}^{\alpha_{31}}x_{2}^{\alpha_{42}} \\[0.5mm]
0 & x_{2}^{\alpha_{2}}-x_{1}^{\alpha_{21}}x_{3}^{\alpha_{23}}\\[0.5mm]
x_{2}^{\alpha_{12}}  & x_{1}^{\alpha_{1}} \\[0.5mm]
0 & -x_{4}^{\alpha_{14}} 
\end{pmatrix}.
$$
}

\end{theorem}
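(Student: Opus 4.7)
The plan is to follow the same template that succeeds in Theorems \ref{thm2}, \ref{thm3}, and \ref{thm3.3}. For each of the four subcases, I would first verify directly that $\phi_1\phi_2 = 0$ and $\phi_2\phi_3 = 0$, reducing each matrix product to a column-by-column identity check that uses only the additive relations $\alpha_1 = \alpha_{21}+\alpha_{31}$, $\alpha_2=\alpha_{12}+\alpha_{42}$, $\alpha_3=\alpha_{23}+\alpha_{43}$, $\alpha_4=\alpha_{14}+\alpha_{34}$ together with the defining expressions for $f_2$, $f_3$, and $f_5$ in each subcase. This confirms that each complex is, in fact, a complex of free $R$-modules.

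To prove exactness I would invoke the Buchsbaum--Eisenbud criterion \cite{buchsbaum-eisenbud}. Since the claimed Betti sequence is $(1,5,6,2)$ in every subcase, I must show $\mathrm{rank}(\phi_1)=1$, $\mathrm{rank}(\phi_2)=4$, $\mathrm{rank}(\phi_3)=2$, and that $I(\phi_j)$ contains a regular sequence of length $j$ for $j=1,2,3$. The first equality is trivial; the upper bound $\mathrm{rank}(\phi_2)\le 4$ follows from McCoy's theorem because $\phi_1\phi_2=0$ forces all $5\times 5$ minors of $\phi_2$ to vanish. The remaining rank bounds will drop out automatically once the depth conditions on $I(\phi_2)$ and $I(\phi_3)$ are established.

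For the first three subcases of this theorem, in which at least one, two, or three of the generators are pure monomials, the verification of the depth conditions is essentially the same as in Theorem \ref{thm3.3}. I would isolate two $4\times 4$ minors of $\phi_2$ obtained by striking out a row together with two columns so that the remaining block is upper-triangular (or becomes so after a column swap), thereby producing minors that are, up to sign, pure powers in disjoint variables such as $x_3^{2\alpha_3}$ and $x_2^{2\alpha_2+\alpha_{12}}$; these give a regular sequence of length two. Similarly, among the $2\times 2$ minors of $\phi_3$ I would exhibit three pure powers in $x_2$, $x_3$, and $x_4$ respectively, which are pairwise coprime and so form a regular sequence of length three.

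The main obstacle is the fourth subcase, where all five generators of $I(C)_*$ are genuine binomials, so none of the earlier minors will simplify to a pure monomial. There my plan is to compute the $4\times 4$ minors of $\phi_2$ and the $2\times 2$ minors of $\phi_3$ directly and show that, after factoring out shared variables, the results decompose into products involving $f_2$, $f_3$, $f_4$, $f_5$, together with genuine monomial minors like $x_2^{\alpha_{12}}$, $x_4^{\alpha_{14}}$, and $x_3^{\alpha_{23}}$ arising from the non-binomial columns. I would then verify coprimeness by inspecting the support of the leading monomials with respect to the negative degree reverse lexicographic order already used in \cite{arslan-mete}: the leading terms of $f_2$, $f_3$, $f_4$, $f_5$ involve distinct variables, so a suitable choice of three minors is coprime and gives the length-three regular sequence in $I(\phi_3)$; an analogous argument covers $I(\phi_2)$. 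Minimality follows from the observation that every entry of $\phi_1$, $\phi_2$, $\phi_3$ lies in the maximal ideal $(x_1,x_2,x_3,x_4)$, completing the proof.
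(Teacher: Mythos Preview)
Your overall strategy coincides with the paper's: verify $\phi_1\phi_2=\phi_2\phi_3=0$ directly and then apply the Buchsbaum--Eisenbud criterion by exhibiting explicit minors that are relatively prime. For the first three subcases the paper proceeds exactly as you outline, selecting pairs of $4\times 4$ minors of $\phi_2$ and triples of $2\times 2$ minors of $\phi_3$ that are (products of) pure powers or involve at most one of the binomials $f_2,f_3$; the specific row/column deletions differ from the ones you name, but the idea is identical.

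Your treatment of the fourth subcase, however, rests on a misreading of the statement. You assert that ``all five generators of $I(C)_*$ are genuine binomials,'' but in Case~3(a) this is false: only $f_2=x_2^{\alpha_2}-x_1^{\alpha_{21}}x_3^{\alpha_{23}}$ and $f_3=x_3^{\alpha_3}-x_1^{\alpha_{31}}x_4^{\alpha_{34}}$ are binomials, while $x_2^{\alpha_{12}}x_4^{\alpha_{14}}$, $x_4^{\alpha_4}$, and $x_3^{\alpha_{23}}x_4^{\alpha_{14}}$ remain monomials. (You may be thinking of the last subcase of Theorem~\ref{thm3}, Case~1(b), where the fifth generator \emph{is} a binomial and the Betti sequence drops to $(1,5,5,1)$; that phenomenon does not occur here.) Consequently the obstacle you anticipate does not arise, and the paper dispatches the fourth subcase just as directly as the others: for $\phi_2$ it exhibits the $4\times 4$ minors $(x_2^{\alpha_{12}}x_4^{\alpha_{14}})^2$ and $-x_3^{2\alpha_{23}}f_3^{\,2}$, and for $\phi_3$ it uses three $2\times 2$ minors equal up to sign to generators of $I(C)_*$. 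Your leading-term/coprimeness argument would still succeed, but it is more machinery than the situation requires.
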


\begin{proof} $Case \;3(a1) :$ 
Clearly, $rank(\phi_1)=1$ and  $rank(\phi_3)=2$.  
In matrix $\phi_2$, deleting the 4th and the 5th columns, and the 3rd row, we have $-x_{3}^{2\alpha_{3}+{\alpha_{23}}}$ and similarly, deleting the 2nd and the 3rd columns, and the 4th row, we obtain 
$x_4^{2\alpha_{4}}$  as $4 \times 4-$minors of $\phi_2$. These two determinants are relatively prime, so $I(\phi_2)$ contains a regular sequence of length 2. 
Among the 2-minors of $\phi_3$, we have $x_2^{\alpha_{2}+\alpha_{12}}$, $x_3^{\alpha_{3}+\alpha_{23}}$ and
$-x_4^{\alpha_{4}}$. Since they are powers of different variables, $I(\phi_3)$ contains a regular sequence of length 3.\\

$Case \;3(a2) :$ 
$rank(\phi_1)=1$ and  $rank(\phi_3)=2$. In matrix 
$\phi_2$, deleting the 3rd row, and the 4th and the 5th columns, we have $-x_3^{2{\alpha_{3}}+{\alpha_{23}}}$ and  deleting the 2nd and the 3rd columns, and the 4th row, we obtain $-x_4^{2\alpha_{4}}$ and these determinants are relatively prime. Among the 2-minors of $\phi_3$, we have 
$x_2^{\alpha_{12}}{f_2}$, $x_3^{\alpha_{3}+{\alpha_{23}}}$ and $-x_4^{\alpha_{4}}$. Since 
$x_2$ is a nonzero divisor modulo $\{x_3, x_4\}$, $I(\phi_3)$ contains a regular sequence of length 3.\\

$Case \;3(a3) :$
$rank(\phi_1)=1$ and  $rank(\phi_3)=2$. 
In matrix $\phi_2$, deleting the 1st and the 6th columns, and the 2nd row, we have $x_{2}^{{2\alpha_{2}}+{\alpha_{12}}}$ and similarly, deleting the 2nd and the 3rd columns, and the 4th row, we obtain $-x_{4}^{2\alpha_{4}}$ and these determinants are relatively prime. Among the 2-minors of $\phi_3$, we have $x_2^{\alpha_{2}+{\alpha_{12}}}$, $x_3^{\alpha_{23}}{f_3}$ and $-x_4^{\alpha_{4}}$ 
and  $x_3$ is a nonzero divisor modulo $\{x_2, x_4\}$. Thus, $I(\phi_3)$ contains a regular sequence of length 3.\\

$Case \;3(a4) :$
As in the above cases, 
$rank(\phi_1)=1$ and  $rank(\phi_3)=2$. In the matrix $\phi_2$, deleting the 1st row, and the 5th and the 6th columns, we obtain $-x_{2}^{2\alpha_{12}}x_{4}^{2{\alpha_{14}}}$ and deleting the 3rd row, and the 2nd and the 3rd columns, we have $x_{3}^{\alpha_{23}}{f_3}^{2}$ and they are relatively prime.  Among the 2-minors of $\phi_3$, we have $-x_2^{\alpha_{12}}{f_2}$, $-x_3^{\alpha_{23}}{f_3}$ and $-x_4^{\alpha_{4}}$ 
and $x_4$ is a nonzero divisor modulo $\{x_2^{\alpha_{12}}{f_2}, x_3^{\alpha_{23}}{f_3}\}$.  Thus, $I(\phi_3)$ contains a regular sequence of length 3.
\end{proof}

\section{Hilbert Function}
In \cite{arslan-mete}, Arslan and Mete showed that the Hilbert function is non-decreasing for local Gorenstein rings with embedding dimension four associated to non-complete intersection monomial curve $C$ in all above cases.
In this section, we compute the Hilbert function of the tangent cone of $C$, if $C$ is a Gorenstein non-complete intersection monomial curve in 
$\mathbb{A}^4$ as in Case 1(a). For the other aforementioned cases, one can get similar results. 
 
Theorem 3.1 implies that the tangent cone of $C$ has the following graded minimal free resolution:

\begin{center}
$0 \longrightarrow F_3 \xrightarrow {\phi_{3}} F_2 \xrightarrow {\phi_{2}} F_1 \xrightarrow {\phi_{1}} R \rightarrow  R/I(C)_* \rightarrow 0$
\end{center}

\noindent where $F_1 = \bigoplus\limits_{i=1}^{5} R(-b_i)$,  $F_2 = \bigoplus\limits_{i=1}^{6}R(-c_i)$ and $F_3 = \bigoplus\limits_{i=1}^{2}R(-d_i)$. Here,  the numbers $b_i$ are called the 1st Betti degrees, $c_i$ are called 2nd Betti degrees and $d_i$ are called 3rd Betti degrees. 
\begin{corollary} Under the hypothesis of Theorem 3.1,  Betti degrees of the minimal graded free resolution of the tangent cone of  $C$ is given by

\begin{center}
$0 \longrightarrow F_3 \xrightarrow {\phi_{3}} F_2 \xrightarrow {\phi_{2}} F_1 \xrightarrow {\phi_{1}} R \rightarrow  R/I(C)_* \rightarrow 0$
\end{center}
\noindent  where  $B_1=\{b_1,b_2,b_3,b_4,b_5\}$, $B_2=\{c_1,c_2,c_3,c_4,c_5,c_6\}$, $B_3=\{d_1,d_2\}$ and
\begin{center}
$b_1=\alpha_{13}+\alpha_{14}$, $b_2=\alpha_{2}$, $b_3=\alpha_{3}$,
$b_4=\alpha_{4}$, $b_5=\alpha_{32}+\alpha_{14}$,
\end{center}
\begin{center}
$c_1=\alpha_{3}+\alpha_{14}$, $c_2=\alpha_{2}+\alpha_{3}$, $c_3=\alpha_{4}+\alpha_{13}$,
$c_4=\alpha_{32}+\alpha_{13}+\alpha_{14}$, $c_5=\alpha_{32}+\alpha_{4}$, $c_6=\alpha_{14}+\alpha_{2}$
\end{center}
\begin{center}
$d_1=\alpha_{2}+\alpha_{3}+\alpha_{14}$, $d_2=\alpha_{4}+\alpha_{13}+\alpha_{32}$.
\end{center}

\end{corollary}

The following corollary stems from the well-known fact that
\begin{center}
$H_{G}(i)= H_{R}(i) - H_{F_1}(i) + H_{F_2}(i) - H_{F_3}(i).$

\end{center}
\begin{corollary} Under the hypothesis of Theorem 3.1., the Hilbert function of the tangent cone of $C$ is given by\\

\noindent 
\begin{eqnarray*}
H_{G}(i)\hspace{-3mm}&=&\hspace{-3mm}\left(\!\!\!\begin{array}{c} i+3 \\ 3 \end{array}\!\!\! \right)
-\left(\! \!\! \begin{array}{c} i-b_1+3 \\ 3 \end{array}\!\!\! \right)
-\left(\! \!\! \begin{array}{c} i-b_2+3 \\ 3 \end{array}\!\!\! \right)
-\left(\! \!\! \begin{array}{c} i-b_3+3 \\ 3 \end{array}\!\!\! \right)\\
&&\hspace{-5.5mm}
-\left(\! \!\! \begin{array}{c} i-b_4+3 \\ 3 \end{array}\!\!\! \right)
-\left(\! \!\! \begin{array}{c} i-b_5+3 \\ 3 \end{array}\!\!\! \right)
+\left(\! \!\! \begin{array}{c} i-c_1+3 \\ 3 \end{array}\!\!\! \right)\\
&&\hspace{-5.5mm}
+\left(\! \!\! \begin{array}{c} i-c_2+3 \\ 3 \end{array}\!\!\! \right)
+\left(\! \!\! \begin{array}{c} i-c_3+3 \\ 3 \end{array}\!\!\! \right)
+\left(\! \!\! \begin{array}{c} i-c_4+3 \\ 3 \end{array}\!\!\! \right)\\
&&\hspace{-5.5mm}
+\left(\! \!\! \begin{array}{c} i-c_5+3 \\ 3 \end{array}\!\!\! \right)
+\left(\! \!\! \begin{array}{c} i-c_6+3 \\ 3 \end{array}\!\!\! \right)
-\left(\! \!\! \begin{array}{c} i-d_1+3 \\ 3 \end{array}\!\!\! \right)\\
&&\hspace{-5.5mm}
-\left(\! \!\! \begin{array}{c} i-d_2+3 \\ 3 \end{array}\!\!\! \right),
\end{eqnarray*}

\noindent for $i \geq 0$.
\end{corollary}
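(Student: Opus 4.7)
The plan is to compute the Hilbert function of $R/I(C)_*$ directly from the minimal graded free resolution recorded in the previous corollary, using the additivity of Hilbert functions along an exact sequence of graded modules. Concretely, if
\[
0 \to F_3 \xrightarrow{\phi_3} F_2 \xrightarrow{\phi_2} F_1 \xrightarrow{\phi_1} R \to R/I(C)_* \to 0
\]
is exact with each $F_p=\bigoplus_j R(-d_{p,j})$, then $H_{R/I(C)_*}(i)=H_R(i)-H_{F_1}(i)+H_{F_2}(i)-H_{F_3}(i)$, because the alternating sum of dimensions in each graded strand of an exact sequence vanishes.

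First I would record the building block: in $R=k[x_1,x_2,x_3,x_4]$ one has $H_R(i)=\binom{i+3}{3}$, and more generally $H_{R(-d)}(i)=\binom{i-d+3}{3}$ (interpreted as $0$ whenever $i<d$, so that the formula remains valid as a polynomial identity of binomial-coefficient functions for all $i\ge 0$). Next I would read off the five shifts appearing in $F_1$, namely $\alpha_{13}+\alpha_{14},\alpha_2,\alpha_3,\alpha_4,\alpha_{32}+\alpha_{14}$; the six shifts in $F_2$, namely $\alpha_3+\alpha_{14},\alpha_2+\alpha_3,\alpha_4+\alpha_{13},\alpha_{32}+\alpha_{13}+\alpha_{14},\alpha_{32}+\alpha_4,\alpha_{14}+\alpha_2$; and the two shifts in $F_3$, namely $\alpha_2+\alpha_3+\alpha_{14}$ and $\alpha_4+\alpha_{13}+\alpha_{32}$. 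These shifts are visible from the entries of the matrices $\phi_1,\phi_2,\phi_3$ in Theorem \ref{thm2} after checking that those matrices are homogeneous once the stated twists are assigned to the basis of each free module; this homogeneity check is a direct inspection of the degrees of the monomial entries.

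Finally, I would assemble the alternating sum: substitute each shift into $\binom{i-d+3}{3}$, attach the sign $(-1)^p$ corresponding to the homological position $p$, and collect terms. This reproduces exactly the stated formula for $H_G(i)$, with a leading $\binom{i+3}{3}$ from $R$, five negative contributions from $F_1$, six positive contributions from $F_2$, and two negative contributions from $F_3$.

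The only genuine task here is the verification that the previous corollary really furnishes a graded resolution with the indicated twists; this amounts to noting that for each generator $g_j$ of $I(C)_*$ of degree $d_j$, the corresponding column of $\phi_2$ is homogeneous of the stated degree, and likewise for $\phi_3$. Once homogeneity is confirmed, the additivity of the Hilbert function in the exact sequence is automatic, and the formula follows with no further computation beyond collecting the terms. I expect no substantive obstacle; the bookkeeping of the twelve binomial summands is the only step requiring care, and it is completely parallel to the list of twists in the preceding corollary.
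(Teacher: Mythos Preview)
Your proposal is correct and matches the paper's approach: the corollary is stated without proof, immediately after the minimal graded free resolution, with the remark that ``by knowing the minimal graded free resolution \ldots\ we can compute it.'' The intended argument is exactly the alternating-sum-of-shifted-Hilbert-functions computation you describe, reading off the thirteen twists from the preceding corollary.
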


\section{Funding}
Authors acknowledge partial financial support from Balikesir University under the project number BAP 2017/108.

\bibliographystyle{amsplain}

\end{document}